\title{Algebraic sets with fully characteristic radicals}
\author{M. Shahryari}
\address{M. Shahryari: Department of Pure Mathematics,  Faculty of Mathematical
Sciences, University of Tabriz, Tabriz, Iran}
\email{mshahryari@tabrizu.ac.ir}
\newcommand{\LL}{\mathcal{L}}
\newcommand{\Rad}{\mathrm{Rad}}
\newcommand{\Fn}{F_{\mathbf{V}}(n)}
\newcommand{\VA}{V_A(S)}
\newcommand{\VG}{V_G(S)}
\newtheorem{corollary}{Corollary}
\newtheorem {theorem}{Theorem}
\begin{document}

\maketitle
\begin{abstract}
We obtain a necessary and sufficient condition for an algebraic set in a group to have a fully characteristic radical. As a result, we see that if the radical of a system of equation $S$ over a group $G$ is fully characteristic, then there exists a class $\mathfrak{X}$ of subgroups of $G$ such that elements of $S$ are identities of $\mathfrak{X}$.
\end{abstract}

{\bf AMS Subject Classification} Primary 20F70, Secondary 08A99.\\
{\bf Keywords} algebraic structures; equations; algebraic set; radical ideal;
  fully invariant congruence, fully characteristic subgroup.

\vspace{3cm}
In this article, our notations are the same as \cite{DMR1}, \cite{DMR2}, \cite{DMR3}, \cite{DMR4} and \cite{ModSH}. The reader should review these references for a complete account of the universal algebraic geometry. We fix an algebraic language $\LL$ and  we  denote equations in the both forms $p\approx q$ and $(p, q)$.  For an algebra $A$ of type $\LL$ and any system of equations $S\subseteq At_{\LL}(x_1, \ldots, x_n)$, we denote the corresponding radical ideal by $\Rad_A(S)$. Recall that
$$
\Rad_A(S)=\{ (p\approx q)\in At_{\LL}(x_1, \ldots, x_n): \VA\subseteq V_A(p\approx q)\},
$$
where $V_A(S)$ and $V_A(p\approx q)$ are the corresponding algebraic sets of $S$ and $p\approx q$, respectively. In other words,  $\Rad_A(S)$ is the set of all atomic logical consequences of $S$ in $A$. In general, one can not give a deductive description of $\Rad_A(S)$, because it depends on the axiomatizablity of the prevariety generated by $A$, \cite{MR}. In this direction, any good description of the radicals is important from the universal algebraic geometric point of view. In this article, we give a necessary and sufficient condition for $\Rad_A(S)$ to be fully characteristic congruence (invariant under all endomorphisms). We apply our main result to obtain connections between radicals, identities, coordinate algebras and relatively free algebras. Although most of the results can be formulate in the general frame of arbitrary algebraic structures, we mainly focus on groups in what follows. As a summary, we give here some results in the case of coefficient free algebraic geometry of groups.

Let $G$ be a group and $E\subseteq G^n$ be an algebraic set (with no coefficients). Then the radical $\Rad(E)$ is a fully characteristic (equivalently verbal) subgroup of the free group $F_n$, if and only if, there exists a family $\{ K_i\}$ of $n$-generator subgroups of $G$ such that $E=\bigcup_iK_i^n$. As a result, we will show that if $\Rad_G(S)$ is a verbal subgroup of $F_n$, then there exists a family $\mathfrak{X}$ of $n$-generator subgroups of $G$ such that $\Rad_G(S)$ is exactly the set of all group identities valid in $\mathfrak{X}$. We also see that under this conditions, there exists a variety $\mathbf{W}$ of groups, such that the $n$-generator relatively free group in $\mathbf{W}$ is the coordinate group of $S$.

\section{Main result}
Suppose $\LL$ is an algebraic language and $\mathbf{V}$ is a variety of algebras of type $\LL$. For a finite set $X=\{ x_1, \ldots, x_n\}$ of variables, $\Fn$ denotes the relatively free algebra of $\mathbf{V}$ generated by $X$. A congruence $R$ in $\Fn$ is called {\em fully characteristic} (or fully invariant), if for all endomorphism $\alpha:\Fn\to \Fn$ and $(p,q)\in R$, we have
$(\alpha(p), \alpha(q))\in R$. The following theorem concerns the situation the radicals of algebraic sets in which are fully characteristic.

\begin{theorem}
Let $A\in \mathbf{V}$ be an algebra and $E\subseteq A^n$ be an algebraic set. Then $\Rad(E)$ is fully characteristic congruence of $\Fn$, if and only if $E=\bigcup_iK_i^n$ for a family $\{ K_i\}$ of $n$-generator subalgebras of $A$.
\end{theorem}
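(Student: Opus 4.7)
The plan is to exploit the universal property of $\Fn$: an endomorphism $\alpha$ of $\Fn$ is uniquely determined by choosing images $\alpha(x_j)=t_j\in\Fn$, and then $\alpha(p)=p(t_1,\ldots,t_n)$. Pairing this with the reformulation
\[
\Rad(E)=\bigcap_{\bar a\in E}\ker\varphi_{\bar a},\qquad \varphi_{\bar a}:\Fn\to A,\ x_j\mapsto a_j,
\]
the condition ``full invariance of $\Rad(E)$'' becomes, concretely, a closure-under-substitution property, which is exactly what being a union of $K_i^n$'s provides on the geometric side.

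For the sufficiency direction I would assume $E=\bigcup_i K_i^n$, fix $(p,q)\in\Rad(E)$ and an endomorphism $\alpha$ with $\alpha(x_j)=t_j$. Given any $\bar a\in E$, one has $\bar a\in K_i^n$ for some $i$; since $K_i$ is a subalgebra, $t_j(\bar a)\in K_i$ for each $j$, so $(t_1(\bar a),\ldots,t_n(\bar a))\in K_i^n\subseteq E$. Because $(p,q)\in\Rad(E)$, evaluating at this new point of $E$ gives $p(t_1(\bar a),\ldots,t_n(\bar a))=q(t_1(\bar a),\ldots,t_n(\bar a))$, i.e.\ $\alpha(p)(\bar a)=\alpha(q)(\bar a)$. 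Since $\bar a$ was arbitrary, $(\alpha(p),\alpha(q))\in\Rad(E)$.

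For the necessity direction I would, for each $\bar a=(a_1,\ldots,a_n)\in E$, let $K_{\bar a}=\langle a_1,\ldots,a_n\rangle$, which is an $n$-generator subalgebra, and aim to show $K_{\bar a}^n\subseteq E$. Using the standard Galois identity $E=V_A(\Rad(E))$, it is enough to check every $(p,q)\in\Rad(E)$ on an arbitrary point $(t_1(\bar a),\ldots,t_n(\bar a))$ of $K_{\bar a}^n$. Define $\alpha:\Fn\to\Fn$ by $\alpha(x_j)=t_j$; full invariance gives $(\alpha(p),\alpha(q))=(p(t_1,\ldots,t_n),q(t_1,\ldots,t_n))\in\Rad(E)$, and evaluating this identity at $\bar a\in E$ delivers exactly the required equality. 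Since $\bar a\in K_{\bar a}^n$, taking the union over $\bar a\in E$ gives $E=\bigcup_{\bar a\in E}K_{\bar a}^n$.

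The calculation is essentially a bookkeeping exercise once the correspondence ``endomorphism of $\Fn$ $\leftrightarrow$ tuple of terms'' is in place, so the main thing to be careful with is working in the relatively free algebra rather than the absolutely free one: the $t_j$ live in $\Fn$, and $p(t_1,\ldots,t_n)$ must be interpreted via the universal property so that the definition of $\alpha$ is well posed and compatible with evaluation in $A\in\mathbf{V}$. Beyond this, the only non-definitional step is invoking $E=V_A(\Rad(E))$, which is the standard closure property of algebraic sets in universal algebraic geometry.
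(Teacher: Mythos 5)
Your proposal is correct and follows essentially the same route as the paper: the same substitution argument for sufficiency, and for necessity the same use of $E=V_A(\Rad(E))$ together with the decomposition $E=\bigcup_{\overline{a}\in E}K(\overline{a})^n$ into powers of the subalgebras generated by the points of $E$. The only difference is presentational (your explicit remark that elements of $\Fn$ correspond to $n$-variable terms and that evaluation factors through the relatively free algebra), which the paper leaves implicit.
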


\begin{proof}
Let $\{ K_i\}$ be a family of $n$-generator subalgebras of $A$ and $E=\bigcup_iK_i^n$ be algebraic. Let $\alpha:\Fn\to \Fn$ be an endomorphism and $(p, q)\in \Rad(E)$. Suppose
$$
\alpha(x_i)=v_i(x_1, \ldots, x_n)
$$
and $\overline{a}=(a_1, \ldots, a_n)\in E$. Hence, there is an index $i$ such that $a_1, \ldots, a_n\in K_i$. Note that we have
\begin{eqnarray*}
\alpha(p)(\overline{a})&=&p(v_1, \ldots, v_n)(\overline{a})\\
                       &=&p(v_1(\overline{a}), \ldots, v_n(\overline{a})).
\end{eqnarray*}
Since $K_i$ is a subalgebra, so for any $j$ we have $v_j(\overline{a})\in K_i$, and therefore
$$
(v_1(\overline{a}), \ldots, v_n(\overline{a}))\in K_i^n\subseteq E.
$$
This shows that
\begin{eqnarray*}
p(v_1(\overline{a}), \ldots, v_n(\overline{a}))&=&q(v_1(\overline{a}), \ldots, v_n(\overline{a}))\\
                                               &=&\alpha(q)(\overline{a}),
\end{eqnarray*}
so, we have $(\alpha(p), \alpha(q))\in \Rad(E)$. Conversely, suppose that $\Rad(E)$ is fully characteristic and $v_1, \ldots, v_n\in \Fn$ are arbitrary. Consider the endomorphism $\alpha(x_i)=v_i$. For any $(p, q)\in \Rad(E)$, we have $(\alpha(p), \alpha(q))\in \Rad(E)$, so for any $\overline{a}\in E$, the equality
$$
p(v_1(\overline{a}), \ldots, v_n(\overline{a}))=q(v_1(\overline{a}), \ldots, v_n(\overline{a}))
$$
holds. This shows that
$$
(v_1(\overline{a}), \ldots, v_n(\overline{a}))\in V_A(\Rad(E))=E.
$$
Therefore, for arbitrary $v_1, \ldots, v_n\in \Fn$ and $\overline{a}\in E$, we have
$$
(v_1(\overline{a}), \ldots, v_n(\overline{a}))\in E.
$$
Let $K(\overline{a})$ be the subalgebra generated by $a_1, \ldots, a_n$. We have
$$
(v_1(\overline{a}), \ldots, v_n(\overline{a}))\in K(\overline{a})^n,
$$
and hence
$$
E=\bigcup_{\overline{a}}K(\overline{a})^n.
$$
The proof is now completed.
\end{proof}

As a result, we see that for an arbitrary system of equations $S$, the radical $\Rad_A(S)$ is fully characteristic, if and only if, there exists a family $\{ K_i\}$ of $n$-generator subalgebras of $A$, such that
$$
\VA=\bigcup_iK_i^n.
$$
This shows that $\Rad_A(S)=\bigcap_i\Rad(K_i^n)$. Since for an arbitrary algebra $K$, the radical $\Rad(K^n)$ is the set of identities of $K$ with variables in $X$, so we have the following corollary.

\begin{corollary}
Let $S$ be a system of equations in the variety $\mathbf{V}$ and $A\in \mathbf{V}$. If $\Rad_A(S)$ is a fully characteristic, then there is a family of $n$-generator subalgebras of $A$, say $\mathfrak{X}$, such that
$$
\Rad_A(S)=\mathrm{id}_{\mathfrak{X}}(n).
$$
\end{corollary}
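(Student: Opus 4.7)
The plan is to read the corollary as an immediate consequence of Theorem~1 combined with two standard compatibility properties of the radical operator. First, since $\Rad_A(S)$ is assumed fully characteristic, Theorem~1 applies to the algebraic set $E=V_A(S)$ and produces a family $\{K_i\}$ of $n$-generator subalgebras of $A$ with $V_A(S)=\bigcup_i K_i^n$. This is the only nontrivial input I need; everything else is formal manipulation.

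Next, I would use that the radical only sees the algebraic set, not the defining system, so $\Rad_A(S)=\Rad_A(V_A(S))$, and that $V_A\mapsto \Rad_A$ turns unions of algebraic sets into intersections of radicals. Together these give
\[
\Rad_A(S) \;=\; \Rad_A\Bigl(\bigcup_i K_i^n\Bigr) \;=\; \bigcap_i \Rad_A(K_i^n),
\]
exactly as the paragraph preceding the corollary already observes.

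Finally, I would identify each factor $\Rad_A(K_i^n)$ with $\mathrm{id}_{K_i}(n)$ by unwinding the definitions: an equation $(p\approx q)$ lies in $\Rad_A(K_i^n)$ precisely when $p(\overline{a})=q(\overline{a})$ for every $\overline{a}\in K_i^n$, which is exactly the condition that $p\approx q$ be an identity of $K_i$ in the variables $x_1,\dots,x_n$. Intersecting over $i$ then yields
\[
\Rad_A(S) \;=\; \bigcap_i \mathrm{id}_{K_i}(n) \;=\; \mathrm{id}_{\mathfrak{X}}(n)
\]
with $\mathfrak{X}=\{K_i\}$, as required.

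The argument is essentially bookkeeping, so I do not foresee a real obstacle. The only place where one must pause is the identification $\Rad_A(K^n)=\mathrm{id}_K(n)$: here one has to be careful that the ambient free algebra is $\Fn$ (i.e.\ free in $\mathbf{V}$ rather than absolutely free), so that the notion of ``identity of $K$'' is taken relative to $\mathbf{V}$; since $K\le A\in\mathbf{V}$ implies $K\in\mathbf{V}$, the two notions agree and the identification is legitimate.
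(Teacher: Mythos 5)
Your proposal is correct and follows essentially the same route as the paper: apply Theorem~1 to $E=V_A(S)$, pass from the union $\bigcup_i K_i^n$ to the intersection $\bigcap_i\Rad(K_i^n)$, and identify each $\Rad(K_i^n)$ with the identities of $K_i$ in the variables $x_1,\ldots,x_n$. Your closing remark about working relative to $\mathbf{V}$ (so that ``identity of $K$'' is unambiguous since $K\le A\in\mathbf{V}$) is a reasonable extra precision that the paper leaves implicit.
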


\section{Application to groups}
The first application of our main result is about algebraic sets in nilpotent groups the coefficient-free radicals in which are characteristic in the free group $F_n$. It is well-known that the classes of fully characteristic and verbal subgroups of $F_n$ are the same. But it is a very hard problem to determine the structure of characteristic subgroups in free  groups. In \cite{Lee}, Vaughan-Lee proved that if $C$ is a characteristic subgroup of $F_n$ and $\gamma_n(F_n)\subseteq C$, then $C$ is fully characteristic. We can use this result of Vaughan-Lee together with our main result to prove the following.

\begin{theorem}
Let $G$ be a nilpotent group of class at most $n$ and $A\subseteq G^n$ be an algebraic set. Then $\mathrm{Rad}(A)$ is a characteristic subgroup of $F_n$, if and only if $A=\bigcup_i K_i^n$ for some family $\{ K_i\}$ of $n$-generator subgroups of $G$.
\end{theorem}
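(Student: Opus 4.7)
The plan is to combine Theorem~1 with the Vaughan--Lee criterion recalled just above.

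The backward direction is immediate: if $A = \bigcup_i K_i^n$ with each $K_i$ an $n$-generator subgroup of $G$, then Theorem~1 gives that $\mathrm{Rad}(A)$ is fully characteristic in $F_n$, and in particular characteristic.

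For the forward direction, suppose $\mathrm{Rad}(A)$ is characteristic. The claim to verify is that $\gamma_n(F_n) \subseteq \mathrm{Rad}(A)$; granting this, Vaughan--Lee upgrades ``characteristic'' to ``fully characteristic,'' and a second application of Theorem~1 then yields the required decomposition, with $K_i$ taken to be the subgroups $\langle a_1, \ldots, a_n\rangle$ as $\overline{a} = (a_1, \ldots, a_n)$ ranges over $A$ (each visibly $n$-generated). The claim itself is forced by the nilpotency hypothesis: for any $w \in \gamma_n(F_n)$ and any $\overline{a} \in A \subseteq G^n$, the evaluation map $x_i \mapsto a_i$ is a homomorphism $F_n \to G$ and therefore sends $\gamma_n(F_n)$ into $\gamma_n(G)$, which collapses to the identity under the stated nilpotency class. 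Hence $w(\overline{a}) = 1$ for every $\overline{a} \in A$, i.e.\ $w \in \mathrm{Rad}(A)$ when the radical is identified with the normal subgroup of $F_n$ of words vanishing identically on $A$.

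The argument is thus essentially an assembly of the two external ingredients and I do not anticipate a serious obstacle. The only mild point to watch is the translation between the ``congruence on $\Fn$'' viewpoint used in Theorem~1 and the ``normal subgroup of $F_n$'' viewpoint natural for coefficient-free group equations; once that identification is in place, two applications of Theorem~1 and a single application of Vaughan--Lee close the loop.
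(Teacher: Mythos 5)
Your proof is correct and follows essentially the same route as the paper: show $\gamma_n(F_n)\subseteq \mathrm{Rad}(A)$ via evaluation and the nilpotency hypothesis, invoke Vaughan--Lee to upgrade ``characteristic'' to ``fully characteristic,'' and then apply Theorem~1 to get the decomposition $A=\bigcup_i K_i^n$ (with the $K_i$ the subgroups generated by the tuples of $A$). The only difference is that you spell out the easy backward direction and the congruence/normal-subgroup identification, which the paper leaves implicit.
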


\begin{proof}
Let $\mathrm{Rad}(A)$ be a characteristic subgroup. Suppose $w\in \gamma_n(F_n)$. Since $G$ is nilpotent of class at most $n$, so for all $(a_1, \ldots, a_n)\in A$, we have $w(a_1, \ldots, a_n)=1$. This shows that $w\in \mathrm{Rad}(A)$. Therefore, we have $\gamma_n(F_n)\subseteq \mathrm{Rad}(A)$ and hence the radical is a fully characteristic subgroup. Now the assertion follows from our main result.
\end{proof}

We are now going to consider  the case of $G$-groups. Let $G$ be an arbitrary group and $\mathbf{V}$ be the variety of $G$-groups (for the basic notions of $G$-groups, see \cite{BMR1}). Note that in this case we have $\Fn=G[X]$, where $G[X]$ denotes the free product $G\ast F_n$. It is shown in \cite{Ama} that the fully characteristic subgroups of $G[X]$ are exactly the $G$-verbal subgroups. Recall that for an arbitrary set $W$ of group words with coefficients from $G$, variables $T_1, \ldots, T_m$ and  arbitrary $G$-group $H$, the $G$-verbal subgroup corresponding to $W$ is the subgroup $W(H)$ of $H$, generated by the set
$$
\{ w(h_1, \ldots, h_m):\ h_1, \ldots, h_m\in H, w\in W\}.
$$

\begin{corollary}
Let $S\subseteq G[X]$ and $\Rad_G(S)$ be a $G$-verbal subgroup of $G[X]$. Then all elements of $S$ are $G$-identities of $G$.
\end{corollary}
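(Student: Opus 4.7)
The plan is to apply Theorem 1 via the identification established in \cite{Ama}, and then to exploit the rigidity of $G$ viewed as a $G$-group. First, I would observe that the hypothesis on $\Rad_G(S)$ combined with \cite{Ama} gives immediately that $\Rad_G(S)$ is a fully characteristic subgroup of $\Fn = \GX$. Taking $A = G$ in Theorem 1, this produces a family $\{K_i\}$ of $n$-generator $G$-subgroups of $G$ for which
$$
\VG = \bigcup_i K_i^n.
$$

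The pivotal step is now a structural observation about the variety of $G$-groups. Every $G$-group carries a distinguished copy of $G$, and a $G$-subgroup is by definition a subgroup that contains this copy. Consequently, any $G$-subgroup of $G$ itself must already contain $G$, hence equal $G$. It follows that each $K_i = G$, so the algebraic set collapses to $\VG = G^n$.

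Finally, the conclusion is essentially a tautology: the equality $\VG = G^n$ says precisely that every $s \in S$ evaluates to the identity under every substitution of elements of $G$ for the variables $x_1, \ldots, x_n$, which is the definition of $s$ being a $G$-identity of $G$. The one place that requires care is the middle paragraph, since the whole argument hinges on the convention that a $G$-subgroup must contain the distinguished copy of $G$; once that is granted, no other genuine obstacle remains, and Corollary 1 together with \cite{Ama} does all of the work.
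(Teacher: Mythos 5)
Your proof is correct and follows essentially the same route as the paper: translate ``$G$-verbal'' into ``fully characteristic'' via \cite{Ama}, apply Theorem 1 with $A=G$ to get $V_G(S)=\bigcup_i K_i^n$, and use the fact that the only $G$-subgroup of $G$ is $G$ itself to conclude $V_G(S)=G^n$, i.e.\ every element of $S$ is a $G$-identity of $G$. No issues.
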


\begin{proof}
Let $E=\VG$. Since $\Rad(E)$ is $G$-verbal, so $E=\bigcup_iK_i^n$, where every $K_i$ is a $G$-subgroup of $G$. But, the only $G$-subgroup of $G$ is $G$ itself. So, we have $E=G^n$ and hence any element of $S$ is a $G$-identity of $G$.
\end{proof}

Suppose $H$ is a  $G$-group. Recall that the coordinate group of a set $E\in H^n$ is defined as
$$
\Gamma(E)=\frac{G[X]}{\Rad(E)}.
$$
Similarly, for a system of equations $S$, we define the coordinate group $\Gamma_H(S)$ to be $\Gamma(V_H(S))$. One of the main problems of the  algebraic geometry over groups  is the investigation of the structure of this coordinate group. We show that if $\Rad_H(S)$ is a $G$-verbal subgroup, then there exists a variety $\mathbf{W}$ of $G$-groups such that $\Gamma_H(S)=F_{\mathbf{W}}(n)$, the $n$-generator relatively free group of $\mathbf{W}$.

\begin{corollary}
Let $H$ be a $G$-group and $S$ be a system of $G$-equations such that $V_H(S)=\bigcup_iK_i^n$ for some family of $n$-generator $G$-subgroups of $H$. Then there is a variety $\mathbf{W}$ such that $\Gamma_H(S)=F_{\mathbf{W}}(n)$. The converse is also true.
\end{corollary}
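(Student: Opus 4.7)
The plan is to run the main theorem through the $G$-group correspondence already established in the preceding discussion. Since $V_H(S) = \bigcup_i K_i^n$ with each $K_i$ an $n$-generator $G$-subgroup of $H$, Theorem~1 applied in the variety $\mathbf{V}$ of $G$-groups (where $\Fn = \GX$) tells me that $\Rad_H(S)$ is a fully characteristic congruence of $\GX$. By the result of \cite{Ama} already invoked in this section, fully characteristic subgroups of $\GX$ coincide with $G$-verbal subgroups, so there exists a set $W$ of group words with coefficients in $G$ such that $\Rad_H(S) = W(\GX)$.

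Next I would define $\mathbf{W}$ to be the subvariety of $G$-groups axiomatized by the $G$-identities $w \approx 1$ for $w \in W$. For an arbitrary $G$-group $H'$ the $G$-verbal subgroup $W(H')$ is the set of values of these words, so $\mathbf{W}$ consists exactly of the $G$-groups in which every element of $W$ evaluates to $1$. Because $\GX$ is the absolutely free $G$-group on the set $X$ of $n$ free generators, the $n$-generator relatively free object of $\mathbf{W}$ is obtained by factoring out the verbal subgroup, giving $F_{\mathbf{W}}(n) = \GX/W(\GX)$. Combined with the identification of $\Rad_H(S)$ with $W(\GX)$, this yields
$$
\Gamma_H(S) = \frac{\GX}{\Rad_H(S)} = \frac{\GX}{W(\GX)} = F_{\mathbf{W}}(n),
$$
which is the required conclusion.

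For the converse I would reverse this argument. Assume $\Gamma_H(S) \cong F_{\mathbf{W}}(n)$ for some variety $\mathbf{W}$ of $G$-groups, and let $U$ be a defining set of $G$-identities for $\mathbf{W}$. Then $\Rad_H(S)$ is the kernel of the canonical projection $\GX \to F_{\mathbf{W}}(n)$, which is precisely $U(\GX)$, i.e.\ a $G$-verbal subgroup of $\GX$. Such subgroups are fully characteristic, so Theorem~1, applied once more in the variety of $G$-groups, produces the decomposition $V_H(S) = \bigcup_i K_i^n$ with each $K_i$ an $n$-generator $G$-subgroup of $H$.

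The main technical point, rather than an obstacle, is to make sure the identification $F_{\mathbf{W}}(n) = \GX/W(\GX)$ is used correctly: one has to know that $\GX$ plays the role of the absolutely free $G$-group on $n$ generators and that quotienting by the verbal subgroup attached to a set of $G$-words gives the relatively free object in the corresponding subvariety. These are standard facts in the framework of $G$-groups \cite{BMR1}, but they must be stated explicitly when invoking the equivalence between fully characteristic congruences and $G$-verbal subgroups from \cite{Ama}.
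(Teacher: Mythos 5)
Your argument is correct and follows essentially the same route as the paper: apply Theorem~1 to see that $\Rad_H(S)$ is fully characteristic, invoke \cite{Ama} to identify it with a $G$-verbal subgroup $W(G[X])$, and read off $\Gamma_H(S)=G[X]/W(G[X])=F_{\mathbf{W}}(n)$, with the converse obtained by recognizing the kernel of $G[X]\to F_{\mathbf{W}}(n)$ as a verbal (hence fully characteristic) subgroup and applying Theorem~1 again. The only cosmetic difference is that the paper takes $W$ to be the set of all $n$-variable $G$-identities of $\mathbf{W}$ rather than an arbitrary defining set, which changes nothing of substance.
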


\begin{proof}
We proved that  $\Rad_H(S)$ is a $G$-verbal subgroup and hence there is a set $W$ of $G$-group words, such that
$$
\Rad_H(S)=W(G[X]).
$$
Let $\mathbf{W}$ be the variety of $G$-groups defined by $W$. Then, we have
\begin{eqnarray*}
\Gamma_H(S)&=&\frac{G[X]}{\Rad_H(S)}\\
           &=&\frac{G[X]}{W(G[X])}\\
           &=&F_{\mathbf{W}}(n).
\end{eqnarray*}
Conversely, suppose there is a variety $\mathbf{W}$ such that $\Gamma_H(S)=F_{\mathbf{W}}(n)$. Let $W$ be the set of all $n$-variable $G$-identities valid in $\mathbf{W}$. Note that $W=W(G[X])$ and
$$
F_{\mathbf{W}}(n)=\frac{G[X]}{W}.
$$
This shows that $\Rad_H(S)=W(G[X])$ is a $G$-verbal subgroup of $G[X]$ and hence $V_H(S)$ has a decomposition $V_H(S)=\bigcup_iK_i^n$ for some family of $n$-generator $G$-subgroups of $H$.
\end{proof}

There are some examples of equational noetherian relatively free groups: ordinary free groups, free elements of the varieties of the form $var(H)$ in which $H$ is equational noetherian and free solvable groups of given fixed derived lengths. As another application, we obtain a sufficient condition for a variety to have equational noetherian relatively free elements.

\begin{corollary}
Let $H$ be a $G$-equational noetherian group and $W$ be a set of $G$-group words such that the $G$-verbal subgroup $W(G[X])$ is the radical of some subset of $H^n$. Let $\mathbf{W}$ be the variety of $G$-groups defined by $W$. Then $F_{\mathbf{W}}(n)$ is $G$-equational noetherian.
\end{corollary}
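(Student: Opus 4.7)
The plan is to identify $F_{\mathbf{W}}(n)$ with a $G$-coordinate group over $H$ by means of the preceding corollary, and then to transfer $G$-equational noetherianity from $H$ along the resulting $G$-separating embedding.

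First, the hypothesis supplies a subset $E \subseteq H^n$ with $\Rad_H(E) = W(G[X])$. Viewing $W$ itself as a system of $G$-equations $w \approx 1$, one checks that $\Rad_H(V_H(W)) = W(G[X])$, so we may assume $E = V_H(W)$. The previous corollary then identifies
$$
F_{\mathbf{W}}(n) \;=\; \frac{G[X]}{W(G[X])} \;=\; \frac{G[X]}{\Rad_H(W)} \;=\; \Gamma_H(W),
$$
so $F_{\mathbf{W}}(n)$ is canonically the $G$-coordinate group of an algebraic set in $H^n$.

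Second, for each $\overline{a} \in E$ there is a $G$-homomorphism $\pi_{\overline{a}}: \Gamma_H(W) \to H$, $[v] \mapsto v(\overline{a})$, and collectively these $\pi_{\overline{a}}$ separate the points of $\Gamma_H(W)$ by the very definition of the radical. Hence $F_{\mathbf{W}}(n)$ is $G$-separated by $H$, and the usual transfer of equational noetherianity applies: given any system $T$ of $G$-equations over $F_{\mathbf{W}}(n)$, invoke $G$-equational noetherianity of $H$ to produce a finite $T_0 \subseteq T$ with $V_H(T_0) = V_H(T)$, and then use the separating family $\{\pi_{\overline{a}}\}_{\overline{a}\in E}$ to lift this equivalence to $V_{F_{\mathbf{W}}(n)}(T_0) = V_{F_{\mathbf{W}}(n)}(T)$.

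I do not expect any substantial obstacle here: the identification as a coordinate group is immediate from the previous corollary, and the transfer step is a standard diagonal-embedding argument already available in \cite{BMR1} and \cite{DMR1}. The only point requiring modest care is confirming that the ambient notion of $G$-equation over $F_{\mathbf{W}}(n)$ is the one for which $H$ is assumed $G$-equational noetherian, and that the separating homomorphisms $\pi_{\overline{a}}$ are indeed $G$-homomorphisms, both of which are automatic in the $G$-group setting recalled in the previous section.
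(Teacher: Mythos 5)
Your proposal is correct, and since the paper states this corollary without giving any proof, your argument in fact supplies the missing one in exactly the way the preceding corollary and the cited references suggest: the hypothesis $\Rad_H(E)=W(G[X])$ gives $W(G[X])=\Rad_H\bigl(V_H(W(G[X]))\bigr)$, so $F_{\mathbf{W}}(n)=G[X]/W(G[X])$ is the coordinate group of an algebraic set over $H$, and the evaluation maps $\pi_{\overline{a}}$ $G$-separate it in $H$, which transfers $G$-equational noetherianity (for systems with coefficients in $G$, the relevant notion here) from $H$ to $F_{\mathbf{W}}(n)$. The only cosmetic point is to read $V_H(W)$ as the algebraic set of the system $W(G[X])\subseteq G[X]$ in the variables $x_1,\ldots,x_n$ (the words in $W$ may involve other variables), and to set aside the degenerate case $E=\emptyset$; neither affects the argument.
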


As a final point, note that one can ask about conditions under which the radical of a system of equations is a characteristic subgroup. It seems that this is more complicated but interesting problem.

\end{document}